
\documentclass[11pt, reqno]{amsart}

\usepackage[top=3.75cm, bottom=3cm, left=3cm, right=3cm]{geometry}
\frenchspacing 

\usepackage{amsmath}
\usepackage{amsthm}
\usepackage{amsfonts}
\usepackage{amssymb}
\usepackage{eucal}
\usepackage{bm}
\usepackage{hyperref}
\usepackage{color}
\usepackage[all,cmtip]{xy}
\usepackage{enumitem}
\usepackage{tikz}  
\usepackage{graphicx}
\usepackage{scalerel}

\numberwithin{equation}{section}

\theoremstyle{plain}
\newtheorem{theorem}{Theorem}[section]
\newtheorem{lemma}[theorem]{Lemma}

\theoremstyle{definition}
\newtheorem{definition}[theorem]{Definition}
\newtheorem{example}[theorem]{Example}
\newtheorem{remark}[theorem]{Remark}

\setlist[itemize]{leftmargin=*, itemsep={2pt}}
\setlist[enumerate]{leftmargin=*, itemsep={2pt}}

%
\makeatletter
\newcommand*{\da@rightarrow}{\mathchar"0\hexnumber@\symAMSa 4B }
\newcommand*{\da@leftarrow}{\mathchar"0\hexnumber@\symAMSa 4C }
\newcommand*{\xdashrightarrow}[2][]{%
  \mathrel{%
    \mathpalette{\da@xarrow{#1}{#2}{}\da@rightarrow{\,}{}}{}%
  }%
}
\newcommand{\xdashleftarrow}[2][]{%
  \mathrel{%
    \mathpalette{\da@xarrow{#1}{#2}\da@leftarrow{}{}{\,}}{}%
  }%
}
\newcommand*{\da@xarrow}[7]{%
  \sbox0{$\ifx#7\scriptstyle\scriptscriptstyle\else\scriptstyle\fi#5#1#6\m@th$}%
  \sbox2{$\ifx#7\scriptstyle\scriptscriptstyle\else\scriptstyle\fi#5#2#6\m@th$}%
  \sbox4{$#7\dabar@\m@th$}%
  \dimen@=\wd0 %
  \ifdim\wd2 >\dimen@
    \dimen@=\wd2 %
  \fi
  \count@=2 %
  \def\da@bars{\dabar@\dabar@}%
  \@whiledim\count@\wd4<\dimen@\do{%
    \advance\count@\@ne
    \expandafter\def\expandafter\da@bars\expandafter{%
      \da@bars
      \dabar@ 
    }%
  }%
  \mathrel{#3}%
  \mathrel{%
    \mathop{\da@bars}\limits
    \ifx\\#1\\%
    \else
      _{\copy0}%
    \fi
    \ifx\\#2\\%
    \else
      ^{\copy2}%
    \fi
  }%
  \mathrel{#4}%
}
\makeatother

\newcommand{\bijartop}[1][]{%
 \ar[#1]
 \ar@<0.7ex>@{}[#1]|-*=0[@]{\sim}} 
 
 \newcommand{\bijarbottom}[1][]{%
 \ar[#1]
 \ar@<-0.95ex>@{}[#1]|-*=0[@]{\sim}} 

\newcommand{\st}{\mid} 
\newcommand{\set}[1]{\left\{ \, #1 \, \right\}}


\newcommand{\Db}{\mathrm{D^b}}

\newcommand{\hpd}{{\natural}}
\newcommand{\svee}{\scriptscriptstyle\vee}

\DeclareMathOperator{\Hom}{Hom}

\newcommand{\Spec}{\mathrm{Spec}}

\newcommand{\Gr}{\mathrm{Gr}}

\newcommand{\Spin}{\mathrm{Spin}}
\newcommand{\SO}{\mathrm{SO}}

\newcommand{\cov}[1]{{#1}_{\textrm{cov}}}

\newcommand{\eps}{\varepsilon}

\DeclareMathOperator{\Sym}{Sym}

\newcommand{\vQ}{Q^{\svee}}


\DeclareMathOperator{\Cl}{\mathsf{Cliff}}

\newcommand{\vV}{V^{\svee}}

\newcommand{\tV}{\tilde{V}}

\newcommand{\CS}{\cC\cS}

\newcommand{\tf}{\tilde{f}} 

\newcommand{\fZ}{\mathfrak{Z}}

\newcommand{\End}{\operatorname{\mathrm{End}}}



\newcommand{\pr}{\mathrm{pr}}

\newcommand{\cO}{\mathcal{O}}
\newcommand{\cA}{\mathcal{A}}
\newcommand{\cB}{\mathcal{B}}
\newcommand{\cC}{\mathcal{C}}

\newcommand{\cE}{\mathcal{E}}
\newcommand{\cF}{\mathcal{F}}

\newcommand{\cH}{\mathbf{H}}

\newcommand{\cQ}{\scalerel*{\mathcal{Q}}{Q}}
\newcommand{\cR}{\mathcal{R}}
\newcommand{\cS}{\mathcal{S}}

\newcommand{\cW}{\mathcal{W}}





\newcommand{\bH}{\mathbf{H}}

\newcommand{\bZ}{\mathbf{Z}}
\newcommand{\bP}{\mathbf{P}}

\newcommand{\bk}{\mathbf{k}}

\newcommand{\sS}{\mathsf{S}}


\begin{document}

\title{Homological projective duality for quadrics}

\author{Alexander Kuznetsov}
\address{{\sloppy
\parbox{0.9\textwidth}{
Steklov Mathematical Institute of Russian Academy of Sciences,\\
8 Gubkin str., Moscow 119991 Russia
\\[5pt]
National Research University Higher School of Economics, Moscow, Russia
}\bigskip}}
\email{akuznet@mi-ras.ru \medskip}

\author{Alexander Perry}
\address{Department of Mathematics, Columbia University, New York, NY 10027 \smallskip}
\email{aperry@math.columbia.edu}

\thanks{A.K. was partially supported by the HSE University Basic Research Program, Russian Academic Excellence Project ``5-100''. 
A.P. was partially supported by NSF postdoctoral fellowship DMS-1606460, 
NSF grant DMS-1902060, and the Institute for Advanced Study. }

\begin{abstract}
We show that over an algebraically closed field of characteristic not equal to~2, 
homological projective duality for smooth quadric hypersurfaces 
and for double covers of projective spaces branched over smooth quadric hypersurfaces 
is a combination of two operations:
one interchanges a quadric hypersurface with its classical projective dual
and the other interchanges a quadric hypersurface with the double cover branched along it.
\end{abstract}

\maketitle


\section{Introduction} 
\label{section-intro}

The theory of homological projective duality (HPD) was introduced in~\cite{kuznetsov-hpd} 
as a way to describe derived categories of linear sections of interesting algebraic varieties.
Since then it was generalized to the noncommutative situation~\cite{NCHPD} and significantly developed in~\cite{joins}.
See~\cite{kuznetsov2014semiorthogonal} and~\cite{thomas2015notes} for surveys of the subject.

Roughly, HPD says that the derived categories of linear sections of a smooth projective variety 
mapping to a projective space $X \to \bP(V)$ are governed by a single (noncommutative) algebraic variety~$X^\hpd \to \bP(\vV)$ 
over the dual projective space, called the HP dual of~$X$.
The computation of~$X^{\hpd}$ thus becomes the main step in understanding these categories.

It is no surprise then that the computation of HP duals is quite hard in general. 
There are not so many examples for which an explicit geometric description 
of the HP dual is known;  
most are listed in~\cite{kuznetsov2014semiorthogonal} 
(see also~\cite{rennemo2015homological}, \cite{rennemo2016hori}, and~\cite[\S\S C--D]{kuznetsov2017sextic} for examples that appeared later).
One of the most basic examples, HPD for smooth quadrics, was stated in~\cite[Theorem~5.2]{kuznetsov2014semiorthogonal} without proof. 
The goal of this paper is to supply a proof. 

To give a precise statement, which we call \emph{quadratic HPD}, recall that
HPD deals with varieties $f \colon X \to \bP(V)$ that 
are equipped with a Lefschetz structure, which is a special type of semiorthogonal decomposition 
of the bounded derived category of coherent sheaves $\Db(X)$ (see~\S\ref{subsection-HPD}). 
In Theorem~\ref{main-theorem} below, both $f \colon Q \to \bP(V)$ and $f^{\hpd} \colon Q^{\hpd} \to \bP(V)$ 
are equipped with natural Lefschetz structures defined in terms of spinor bundles (see Lemma~\ref{lemma-smooth-Q-lc}). 

We work over an algebraically closed field $\bk$ of characteristic not equal to~$2$. 
Recall that the classical projective dual of a smooth quadric hypersurface $Q \subset \bP(V)$ is itself 
a smooth quadric hypersurface $Q^{\svee} \subset \bP(\vV)$. 
The HP dual of $Q$ is more subtle: 

\begin{theorem}
\label{main-theorem}
Let~\mbox{$f \colon Q \to \bP(V)$} be either the embedding of a  
smooth irreducible quadric hypersurface 
or a double cover branched along a smooth quadric hypersurface, 
equipped with its natural Lefschetz structure as in Lemma~\textup{\ref{lemma-smooth-Q-lc}}. 
The homological projective dual~\mbox{$f^\hpd \colon Q^\hpd \to \bP(\vV)$} of~\mbox{$f \colon Q \to \bP(V)$} is given as follows: 
\begin{enumerate}
\item 
\label{HPD-quadrics-1}
If $f$ is an embedding and $\dim(Q)$ is even, then $Q^\hpd = Q^{\svee}$ is the classical projective dual of~$Q$
and~$f^\hpd \colon Q^{\natural} \to \bP(\vV)$ is its natural embedding. 
\item 
\label{HPD-quadrics-d-odd} 
If $f$ is an embedding and $\dim(Q)$ is odd, then $f^\hpd \colon Q^{\natural} \to \bP(\vV)$ is the double cover 
branched along the classical projective dual of $Q$. 
\item 
\label{HPD-quadrics-dc-even}
If $f$ is a double covering and $\dim(Q)$ is even, then $f^\hpd \colon Q^{\natural} \to \bP(\vV)$ is the classical projective 
dual of the branch locus of $f$. 
\item 
\label{HPD-quadrics-dc-odd} 
If $f$ is a double covering and $\dim(Q)$ is odd, then $f^\hpd \colon Q^{\natural} \to \bP(\vV)$ is the double cover 
branched along the classical projective dual of the branch locus of $f$. 
\end{enumerate} 
In all cases $Q^\natural$ is considered with the Lefschetz structure defined in Lemma~\textup{\ref{lemma-smooth-Q-lc}}. 
\end{theorem}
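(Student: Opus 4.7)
The proof proceeds by a unified strategy: form the universal hyperplane section $\cH(Q) \to \bP(\vV)$ and analyze it as a quadric fibration (or as an iterated double cover-quadric construction), then use Kuznetsov's description of the derived category of a quadric fibration via the sheaf of even Clifford algebras to extract a geometric model of the HPD component. By the HPD criterion of~\cite{NCHPD}, identifying~$f^\hpd \colon Q^\hpd \to \bP(\vV)$ amounts to exhibiting a $\bP(\vV)$-linear equivalence between $\Db(Q^\hpd)$ and the residual category of $\Db(\cH(Q))$ orthogonal to the Lefschetz collection pulled back from~$Q$, in a manner compatible with the Lefschetz structures built from spinor bundles.

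For the embedding cases~\eqref{HPD-quadrics-1} and~\eqref{HPD-quadrics-d-odd}, the universal hyperplane section $\cH(Q) \to \bP(\vV)$ is a flat quadric fibration of relative dimension $\dim(Q) - 1$, whose degeneracy locus is the classical projective dual quadric $Q^\svee \subset \bP(\vV)$. By the quadric fibration theorem, the nontrivial $\bP(\vV)$-linear component of $\Db(\cH(Q))$ is equivalent to modules over the sheaf of even Clifford algebras $\Cl_0$ of the fibration, and the parity of the fiber dimension controls the structure of $\Cl_0$. If $\dim(Q)$ is even, the fibers are odd-dimensional, so $\Cl_0$ is an Azumaya algebra on $\bP(\vV)$, which Morita-reduces via the relative spinor bundle to the structure sheaf of $Q^\svee$ itself; if $\dim(Q)$ is odd, the fibers are even-dimensional, so the center of $\Cl_0$ defines the double cover of $\bP(\vV)$ ramified along $Q^\svee$, which after a second Morita reduction yields the geometry claimed in~\eqref{HPD-quadrics-d-odd}.

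The double covering cases~\eqref{HPD-quadrics-dc-even} and~\eqref{HPD-quadrics-dc-odd} are treated analogously, but with one additional Clifford layer. If $f \colon Q \to \bP(V)$ is the double cover branched along a smooth quadric $B \subset \bP(V)$, then $\cH(Q) \to \bP(\vV)$ is a relative double cover of the universal hyperplane of $\bP(V)$ branched along the universal hyperplane section of $B$; its derived category carries a compatible Clifford description that effectively shifts the parity analysis by one unit relative to the embedding case. The parity of $\dim(Q)$ then produces either the classical projective dual of $B$ (even case, where the two Clifford layers cancel and we land in a purely geometric setting) or its double cover along $B^\svee$ (odd case, where one extra $\bZ/2$ cover survives).

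The main obstacle, in my view, is executing the Morita and Clifford reductions in a way that correctly matches the Lefschetz structures on both sides. On the HPD side the Lefschetz structure is built from the two spinor bundles $\cS_{\pm}$ in the even-dimensional case or the single spinor bundle $\cS$ in the odd-dimensional case, while on the universal hyperplane section side it is built from the relative spinor bundles under projection to $\bP(\vV)$ and their interaction with the projective twist by $\cO(1)$. Tracking how these spinor bundles transform under hyperplane section, and matching them with the appropriate modules over the Clifford algebra or its center, is the technical core of the proof; the four-way parity split in the statement is a direct reflection of this identification.
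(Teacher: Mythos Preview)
Your sketch for case~\eqref{HPD-quadrics-1} contains a genuine error. When $\dim(Q)=2d$ is even the bundle $\cW$ has odd rank $2d+1$, so $\Cl_0(\cW)$ is generically simple over $\bP(\vV)$, but it is \emph{not} Azumaya along $Q^{\svee}$ and it does \emph{not} ``Morita-reduce to the structure sheaf of $Q^{\svee}$''. A rank count already rules this out: in this parity $\cQ_1=\langle\cS_+,\cO\rangle$ has two generators, so comparing the quadric-fibration decomposition of $\Db(\bH(Q))$ with the HPD decomposition~\eqref{eq:hpd-sod} shows that the Clifford block $\Db(\bP(\vV),\Cl_0(\cW))$ is strictly larger than $\Db(Q)^{\hpd}$ by one copy of $\Db(\bP(\vV))$. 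What the paper actually does is pass to the \emph{full} Clifford algebra $\Cl(\cW)=\Cl_0(\cW)\oplus\Cl_1(\cW)$, whose rank-$2$ center cuts out the double cover $\zeta\colon Z\to\bP(\vV)$ branched along $Q^{\svee}$; the resulting algebra $\cR$ on $Z$ is Azumaya and is Morita-trivialized by the spinor module $\sS_+\oplus\sS_-$, giving $\Db(\bP(\vV),\Cl_0(\cW))\simeq\Db(Z)^{\bZ/2}$. The root-stack decomposition $\Db(Z)^{\bZ/2}=\langle\Db(Q^{\svee}),\Db(\bP(\vV))\rangle$ then peels off the extra $\Db(\bP(\vV))$, and one checks its embedding functor lands exactly in the $\cS_+(H)$-component of $\cQ_1(H)\boxtimes\Db(\bP(\vV))$, leaving $\Db(Q^{\svee})\simeq\Db(Q)^{\hpd}$. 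So even in the ``geometric'' case a double cover appears, one level deeper than your sketch suggests.

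By contrast, your description of case~\eqref{HPD-quadrics-d-odd} is essentially right and in fact more direct than the paper's route: there $\cQ_1=\langle\cO\rangle$, the block counts match, and the center of $\Cl_0(\cW)$ (now $\cW$ has even rank) does define the double cover branched along $Q^{\svee}$, with the Azumaya algebra over it Morita-trivialized by spinors. The paper, however, does \emph{not} argue any of \eqref{HPD-quadrics-d-odd}--\eqref{HPD-quadrics-dc-odd} this way: it proves only case~\eqref{HPD-quadrics-1} via Clifford algebras and then deduces \eqref{HPD-quadrics-dc-even} from~\eqref{HPD-quadrics-1} by the linear-projection theorem of Carocci--Tur\v{c}inovi\'{c} (projection on one side of HPD corresponds to a hyperplane section on the other), \eqref{HPD-quadrics-d-odd} from~\eqref{HPD-quadrics-dc-even} by the duality of HPD, and \eqref{HPD-quadrics-dc-odd} from~\eqref{HPD-quadrics-d-odd} by projection again. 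Your uniform Clifford approach is a legitimate alternative for those cases, but you must first repair case~\eqref{HPD-quadrics-1}, and in every case you still owe the verification that Lefschetz centers match: the paper does this by computing the Fourier--Mukai kernel explicitly as a two-term spinor complex on $Q\times Q^{\svee}$ (see~\eqref{eq:resolution-delta-psi}) and reading off the images of $\cO_Q$ and $\cS_+$ under the left adjoint.
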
 

An important ingredient in HPD is the {HPD kernel}, which is an object 
\begin{equation*}
\cE \in \Db((X \times X^\hpd) \times_{\bP(V) \times \bP(\vV)} \bH),
\end{equation*}
where $\bH \subset \bP(V) \times \bP(\vV)$ is the incidence divisor, that provides all of the important functors.
At the end of the paper (Remark~\ref{remark:hpd-kernels}) we describe the HPD kernels for quadratic HPD.

Theorem~\ref{main-theorem} is a key ingredient in 
\cite{categorical-cones}, where using ``categorical cones'' we 
bootstrap to a description of HPD even when $Q$ is not smooth 
and its image does not span $\bP(V)$.  
As shown in \cite{categorical-cones}, this leads to a powerful 
description of the derived categories of quadratic sections of varieties, 
which among other things proves the duality conjecture for Gushel--Mukai varieties 
from~\cite{kuznetsov2016perry}.

In general the HP dual of a Lefschetz variety is noncommutative 
(i.e. is a suitably enhanced triangulated category, see the discussion before Definition~\ref{definition-lef-equiv}),
but quadratic HPD turns out to be a purely commutative statement.
Thanks to this, in the present paper we do not need the noncommutative setup of~\cite{NCHPD}.
However, we need some results on HPD that were proved in~\cite{NCHPD} and~\cite{categorical-cones};
to reformulate the corresponding statements in the commutative setup of this paper one just has to replace 
Lefschetz categories by derived categories of Lefschetz varieties.

The paper is organized as follows. 
In \S\ref{section-HPD-quadrics} we briefly review the theory of 
HPD and describe the Lefschetz structure of a quadric. 
Then in \S\ref{section-proof} we prove Theorem~\ref{main-theorem}. 

All functors (pullbacks, pushforwards, tensor products) in this paper are derived,
and the base field $\bk$ is an algebraically closed field of characteristic not equal to~$2$.

\subsection*{Acknowledgements} 
We would like to thank the referee for useful comments.

\section{HPD and the Lefschetz structure of quadrics}
\label{section-HPD-quadrics} 

In this section, we begin by describing the framework of HPD. 
Then we explain how quadrics can naturally be regarded as Lefschetz varieties, 
and hence can be considered as objects of this theory. 

\subsection{Homological projective duality}
\label{subsection-HPD} 
We recall the basics of HPD in the form presented in~\cite{NCHPD} and~\cite{joins},
but to simplify the exposition we focus on the purely smooth and proper commutative setting, 
which is sufficient for our purposes. 

Let $X$ be a smooth proper variety over $\bk$, and let $f \colon X \to \bP(V)$ 
be a morphism to a projective space. 
We denote by $\Db(X)$ the bounded derived category of coherent sheaves on~$X$.
A \emph{Lefschetz center} of $\Db(X)$ is an admissible subcategory $\cA_0 \subset \Db(X)$ such that there are semiorthogonal decompositions
\begin{equation}
\label{eq:lefschetz-decompositions}
\begin{aligned}
\Db(X) &= \langle \cA_0, \cA_1(1), \dots, \cA_{m-1}(m-1) \rangle,\\
\Db(X) &= \langle \cA_{1-m}(1-m), \dots, \cA_{-1}(-1), \cA_0 \rangle,
\end{aligned}
\end{equation} 
called respectively the \emph{right} and \emph{left Lefschetz decomposition} of $\Db(X)$,
whose components, called the \emph{Lefschetz components} of $\Db(X)$, form two chains of admissible subcategories 
\begin{equation*}
0 \subset \cA_{1-m} \subset \dots \subset \cA_{-1} \subset \cA_0 \supset \cA_1 \supset \dots \supset \cA_{m-1}  \supset 0.
\end{equation*}
Here, $\cA_i(i)$ denotes the image of $\cA_i$ under the autoequivalence of $\Db(X)$ given by tensoring with $f^* \cO_{\bP(V)}(i)$. 
We call $f \colon X \to \bP(V)$ a \emph{Lefschetz variety} if it is equipped with a Lefschetz center $\cA_0 \subset \Db(X)$.
By~\cite[Lemma~6.3]{NCHPD} the existence of one of the decompositions~\eqref{eq:lefschetz-decompositions} 
implies the existence of the other,  
the components~$\cA_i$ are completely determined by the Lefschetz center $\cA_0$, 
and~$\cA_{m-1} \ne 0$ if and only if $\cA_{1-m} \ne 0$.
The minimal $m$ with this property is called the \emph{length} of the Lefschetz variety $X$. 
We say $X$ is \emph{moderate} if $m < \dim(V)$ (see~\cite[Remark~2.12]{joins} for a discussion of this notion).

Let $\bH \subset \bP(V) \times \bP(\vV)$ be the natural incidence divisor. 
Let $\bH(X) := X \times_{\bP(V)} \bH$,
so that we have a commutative diagram 
\begin{equation*} 
\xymatrix{
\bH(X) \ar[r]^-{\delta} \ar[d] & X \times \bP(\vV) \ar[d] \ar[r] & X \ar[d]^{f} \\ 
\bH \ar[r] \ar[dr] & \bP(V) \times \bP(\vV) \ar[r] \ar[d] & \bP(V) \\ 
& \bP(\vV) &
}
\end{equation*} 
with cartesian squares. 
We denote by 
\begin{equation*}
\pi_X \colon \bH(X) \to X
\qquad\text{and}\qquad
h_X \colon \bH(X) \to \bP(\vV)
\end{equation*}
the natural projections.

The \emph{HPD category} of a Lefschetz variety $f \colon X \to \bP(V)$ is 
the triangulated subcategory of~$\Db(\bH(X))$ defined by 
\begin{equation*}
\Db(X)^\hpd := \set{ \cF \in \Db(\bH(X)) \st  \delta_*(\cF) \in \cA_0 \boxtimes \Db(\bP(\vV)) },
\end{equation*}
Here, $\cA_0 \boxtimes \Db(\bP(\vV))$ denotes the triangulated subcategory of~$\Db(X \times \bP(\vV))$ 
generated by box tensor products of objects in each factor.
The HPD category can alternatively be characterized by the $\bP(\vV)$-linear semiorthogonal 
decomposition 
\begin{equation}
\label{eq:hpd-sod}
\Db(\bH(X)) = \Big\langle \Db(X)^\hpd, \delta^*(\cA_1(1) \boxtimes \Db(\bP(\vV))), \dots, \delta^*(\cA_{m-1}(m-1) \boxtimes \Db(\bP(\vV))) \Big\rangle,
\end{equation} 
where the $\cA_i$ are the Lefschetz components of $\Db(X)$. 

Morally, the HP dual variety of $X$ is a variety whose derived category is equivalent to the HPD category of $X$.
A priori, the HP dual of $X$ may not exist as an algebraic variety.  
However, if $X$ is moderate
then the HP dual 
always exists as a noncommutative Lefschetz variety \cite[Theorem~8.7(1)]{NCHPD}. 
More precisely, $\Db(X)^{\hpd}$ has the structure of a $\bP(\vV)$-linear category 
in the sense of \cite[\S2]{NCHPD}. 
The notion of a Lefschetz center extends to such categories, and 
$\Db(X)^{\hpd}$ has a canonical Lefschetz center given by 
\begin{equation}
\label{cA0-hpd}
\cA_0^\hpd  = \gamma^*\pi_X^*(\cA_0) \subset \Db(X)^\hpd
\end{equation} 
where $\gamma^*$ denotes the left adjoint of the 
inclusion $\gamma \colon \Db(X)^{\hpd} \to \Db(\bH(X))$. 
This gives $\Db(X)^{\hpd}$ the structure of a Lefschetz category over $\bP(\vV)$
and allows us to make the following definition. 

\begin{definition}
\label{definition-lef-equiv}
A Lefschetz variety $f^\hpd \colon X^\hpd \to \bP(\vV)$ is \emph{HP dual} to a moderate Lefschetz variety~$f \colon X \to \bP(V)$
if there is a Fourier--Mukai kernel 
\begin{equation*}
\cE \in \Db(\bH(X) \times_{\bP(\vV)} X^\hpd), 
\end{equation*}
called the \emph{HPD kernel}, such that 
the corresponding Fourier--Mukai functor 
\begin{equation*}
\Phi_\cE \colon \Db(X^\hpd) \to \Db(\bH(X))
\end{equation*}
induces a \emph{Lefschetz equivalence} $\Db(X^\hpd) \simeq \Db(X)^\hpd$, i.e. an equivalence that 
identifies the Lefschetz centers on each side.
\end{definition} 

The definition of HPD can be conveniently reformulated as follows: 
if $\cA_0$ and $\cB_0$ are the Lefschetz centers of $X$ and $X^\hpd$, then
\begin{align}
\label{eq:hpd-prop-1}
&\Phi_\cE \colon \Db(X^\hpd) \xrightarrow{\ \sim \ } \Db(X)^\hpd \subset \Db(\bH(X)), 
\qquad\text{and}\qquad \\
\label{eq:hpd-prop-2}
&\Phi_\cE^*(\pi_X^* (\cA_0)) = \cB_0 \subset \Db(X^\hpd), 
\end{align} 
where $\Phi_\cE^*$ is the left adjoint functor of~$\Phi_\cE$.
Indeed, by~\eqref{eq:hpd-prop-1} the functor $\Phi_{\cE}$ can be written as~$\Phi_{\cE} = \gamma \circ \phi_{\cE}$ 
where $\phi_{\cE} \colon \Db(X^\hpd) \to \Db(X)^{\hpd}$ is an equivalence and $\gamma \colon \Db(X)^{\hpd} \to \Db(\bH(X))$ is the inclusion. 
Thus by the definition~\eqref{cA0-hpd} of the Lefschetz center $\cA_0^\hpd \subset \Db(X^{\hpd})$, 
condition~\eqref{eq:hpd-prop-2} can be rewritten as $\phi_{\cE}^*( \cA_0^{\hpd}) = \cB_0$. 
Since $\phi_{\cE}^* \colon \Db(X)^{\hpd} \to \Db(X^\hpd)$ is inverse to the equivalence~$\phi_{\cE}$, 
this shows that $\Phi_{\cE}$ identifies the Lefschetz centers of $\Db(X^\hpd)$ and~$\Db(X)^\hpd$.

To finish this brief introduction, we recall two important properties. 
First, HPD is really a duality: if~${X^\natural} \to \bP(\vV)$ is the HP dual variety of 
a smooth proper moderate Lefschetz variety~$X \to \bP(V)$
with its natural Lefschetz structure, then the HP dual variety of~${X^\natural}$ is~$X$ 
(see~\cite[Theorem 7.3]{kuznetsov-hpd} or~\cite[Theorem 8.9]{NCHPD}). 
Second, there is a tight connection between HPD and classical projective duality.
For instance, if the map $f \colon X \to \bP(V)$ is an embedding then the classical projective dual $X^{\svee} \subset \bP(\vV)$
coincides with the set of critical values of the map~\mbox{$X^\hpd \to \bP(\vV)$} 
from the HP dual variety~\cite[Theorem 7.9]{kuznetsov-hpd}.

\subsection{Spinor bundles and the Lefschetz structure of quadrics}

Let $Q$ be a smooth quadric, i.e. an integral scheme over $\bk$ 
which admits a closed immersion into a projective space as a quadric hypersurface. 
We denote by $\cO_Q(1)$ the restriction of the line bundle $\cO(1)$ from this ambient space.
The main result of this paper is a description of the HP dual of $Q$. 
To make sense of this, we need to specify the structure of a Lefschetz 
variety on $Q$, i.e. a morphism to a projective space and a Lefschetz 
center of $\Db(Q)$. 

First, we specify the class of morphisms that we consider.

\begin{definition}
We say a morphism $f \colon Q \to \bP(V)$ is \emph{standard} if there is an isomorphism
\begin{equation*}
f^*\cO_{\bP(V)}(1) \cong \cO_Q(1).
\end{equation*}
We call $f$ \emph{non-degenerate} if its image is not contained in a hyperplane of~$\bP(V)$.
\end{definition} 

In this paper we will only be concerned with non-degenerate standard maps of smooth quadrics; 
see~\cite[\S5]{categorical-cones} for results about degenerate maps, which are obtained using 
the non-degenerate case as a starting point.
If $f$ is standard and non-degenerate, then it is either a divisorial embedding or a double covering. 
Indeed, by definition any standard morphism $f$ is the composition of an embedding $Q \hookrightarrow \bP^n$ as a quadric hypersurface 
followed by a linear embedding $\bP^n \hookrightarrow \bP(V)$ into a larger projective space, 
or by a linear projection $\bP^n \dashrightarrow \bP(V)$ to a smaller projective space.
In the first case the embedding must be an isomorphism
in order for $f$ to be non-degenerate, and 
in the second case the map $\bP^n \dashrightarrow \bP(V)$ must be linear projection from a point 
not on~$Q$ in order for~$f$ to be a regular morphism. 
In the second case, $f$ is then a double cover of $\bP(V)$ branched along a quadric hypersurface. 

The Lefschetz center of $Q$ will be defined in terms of spinor bundles. 
We follow~\cite{ottaviani} for our conventions on spinor bundles, 
and recall some of the key facts here (see~\cite[Theorem~2.8]{ottaviani}). 

Let $Q$ be a smooth quadric of even dimension $2d$, and write $H$ for the hyperplane 
class so that $\cO(H) = \cO_Q(1)$. 
Let $\Spin(Q)$ be the universal covering of the special orthogonal group~$\SO(Q)$ associated with the quadric $Q$.
Then $Q$ carries a pair of~$\Spin(Q)$-equivariant
vector bundles $\cS_+$ and $\cS_-$ of rank $2^{d-1}$, called the \emph{spinor bundles}. 

\begin{example}
If $d = 1$ then $Q \cong \bP^1 \times \bP^1$ and~$\cS_+ = \cO(-1, 0)$ and~$\cS_- = \cO(0,-1)$.
If $d = 2$ then $Q \cong \Gr(2,4)$ and the bundles $\cS_+$ and $\cS_-$ are the two tautological subbundles 
of rank~2.
\end{example}

If $d \geq 2$ then $\cS_+$ and $\cS_-$ both have determinant~$\cO_Q(-2^{d-2})$.  
Denoting by $\sS_\pm$ the $2^d$-dimensional half-spinor representations of $\Spin(Q)$, 
there are canonical exact sequences
\begin{equation}
\label{eq:spinor-sequences-even}
0 \to \cS_+ \to \sS_+ \otimes \cO_Q \to \cS_-(H) \to 0,
\qquad 
0 \to \cS_- \to \sS_- \otimes \cO_Q \to \cS_+(H) \to 0.
\end{equation}
Moreover, if $f \colon Q \to \bP(V)$ is an embedding of $Q$ as a quadric hypersurface
the pushforwards of the spinor bundles have the following standard resolutions
\begin{equation}
\label{eq:spinor-sequence-pn}
\begin{aligned}
0 \to \sS_- \otimes \cO_{\bP(V)}(-H) \to \sS_+ \otimes \cO_{\bP(V)} \to f_*(\cS_-(H)) \to 0,\\
0 \to \sS_+ \otimes \cO_{\bP(V)}(-H) \to \sS_- \otimes \cO_{\bP(V)} \to f_*(\cS_+(H)) \to 0\hphantom{,}
\end{aligned}
\end{equation} 
(see, e.g., \cite[Example 3.4]{KF}, but note that~\cite{KF} uses a different convention for spinor bundles).
Another nice property of spinor bundles is their self-duality up to a twist:
\begin{equation}
\label{eq:spinor-duality-even}
\cS_\pm(H) \cong 
\begin{cases}
\cS_\pm^{\svee}, & \text{if $d$ is even},\\
\cS_\mp^{\svee}, & \text{if $d$ is odd}.
\end{cases}
\end{equation} 

Similarly, if $Q$ is a smooth quadric of odd dimension $2d - 1$, it carries one spinor bundle $\cS$ of rank $2^{d-1}$
that fits into an exact sequence
\begin{equation}
\label{eq:spinor-sequences-odd}
0 \to \cS \to \sS \otimes \cO_Q \to \cS(H) \to 0,
\end{equation}
where $\sS$ is the spinor representation of $\Spin(Q)$, and such that
\begin{equation}
\label{eq:spinor-duality-odd}
\cS(H) \cong \cS^{\svee}.
\end{equation} 
Moreover, if $Q$ is represented as a hyperplane section of a smooth quadric $Q'$ of even dimension, 
then $\cS$ is isomorphic to the restriction of either of the spinor bundles ${\cS_{\pm}}$ on $Q'$:
\begin{equation}
\label{eq:spinor-restriction}
\cS \cong \cS_\pm\vert_Q,
\end{equation} 
see~\cite[Theorem~1.4(i)]{ottaviani}.

In what follows, when $Q$ is a smooth quadric of arbitrary dimension, 
we will denote by $\cS$ a chosen spinor bundle on it --- 
the only one in the odd-dimensional case, or one of the two in the even-dimensional case.
With this convention, we have the following result.  

\begin{lemma}
\label{lemma-smooth-Q-lc}
Let $f \colon Q \to \bP(V)$ be a standard morphism of a smooth quadric $Q$. 
Let $\cS$ denote a spinor bundle on $Q$. 
Then $Q$ has the structure of a Lefschetz variety over $\bP(V)$ of length~$\dim(Q)$ with Lefschetz center
\begin{equation*}
\cQ_0 = \langle \cS , \cO \rangle 
\subset \Db(Q). 
\end{equation*}
Further, if $p \in \set{0,1}$ is the parity of $\dim(Q)$, i.e. $p = \dim(Q) \pmod 2$, 
then the nonzero Lefschetz components of $\Db(Q)$ are given by 
\begin{equation*}
\cQ_i = \begin{cases}
\langle \cS, \cO \rangle & \text{for $|i| \leq 1-p$} , \\ 
\langle \cO \rangle & \text{for $1-p < |i| \leq \dim(Q)-1$}.
\end{cases}
\end{equation*}
\end{lemma}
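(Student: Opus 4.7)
The plan is to deduce the claimed Lefschetz structure from Kapranov's full exceptional collection for a smooth quadric, regrouping its terms and using the spinor sequences~\eqref{eq:spinor-sequences-even} to perform one rearrangement in the even-dimensional case. A preliminary observation is that standardness of $f$ ensures $\cO_Q(1) = f^*\cO_{\bP(V)}(1)$, so the twist appearing in~\eqref{eq:lefschetz-decompositions} coincides with the twist by the hyperplane class on $Q$ and the argument is insensitive to whether $f$ is an embedding or a double cover.

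With $n = \dim(Q)$, Kapranov's theorem supplies a full exceptional collection
$$\Db(Q) = \langle \cS, \cO, \cO(1), \dots, \cO(n-1)\rangle \quad \text{if $n$ is odd,}$$
and
$$\Db(Q) = \langle \cS_+, \cS_-, \cO, \cO(1), \dots, \cO(n-1)\rangle \quad \text{if $n$ is even.}$$
In the odd case, grouping the first two terms immediately produces the right Lefschetz decomposition with center $\cQ_0 = \langle \cS, \cO\rangle$ and components $\cQ_i = \langle \cO\rangle$ for $1 \leq i \leq n-1$. In the even case, fix $\cS$ to be one of the two spinor bundles $\cS_{\pm}$; then the sequence in~\eqref{eq:spinor-sequences-even} whose cokernel is $\cS(1)$ realizes the unchosen spinor bundle as a two-term complex built from $\cO$ and $\cS(1)$. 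I would use this to replace the pair $(\cS_\mp, \cO)$ in the Kapranov collection by the pair $(\cO, \cS(1))$, obtaining the rearranged full exceptional collection
$$\Db(Q) = \langle \cS, \cO, \cS(1), \cO(1), \cO(2), \dots, \cO(n-1)\rangle,$$
which groups into the right Lefschetz decomposition with $\cQ_0 = \cQ_1 = \langle \cS, \cO\rangle$ and $\cQ_i = \langle \cO\rangle$ for $2 \leq i \leq n-1$.

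Once the right Lefschetz decomposition is established, the existence of the corresponding left Lefschetz decomposition with the same center, the uniqueness of the components, the equality $\cQ_{-i} = \cQ_i$ encoded in the statement, and the fact that the length equals exactly $n = \dim(Q)$, are all formal consequences of~\cite[Lemma 6.3]{NCHPD}. The main technical obstacle is verifying the rearrangement in the even case: one must confirm that the map $\cS_\mp \to \sS_\mp \otimes \cO$ in~\eqref{eq:spinor-sequences-even} is the canonical evaluation, so that $\cO$ and $\cS(1)$ genuinely generate the same subcategory as $\cS_\mp$ and $\cO$ while preserving semiorthogonality with the remaining twists $\cO(1), \dots, \cO(n-1)$. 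This reduces to standard cohomology vanishings for spinor bundles already implicit in Kapranov's theorem, so it should present no essential new difficulty.
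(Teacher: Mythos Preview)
Your proposal is correct and follows essentially the same approach as the paper: both start from Kapranov's full exceptional collection, read off the Lefschetz structure directly in the odd case, and in the even case use the spinor sequence~\eqref{eq:spinor-sequences-even} to mutate $(\cS_\mp,\cO)$ into $(\cO,\cS(1))$, yielding the decomposition $\Db(Q) = \langle \cS, \cO, \cS(1), \cO(1), \dots, \cO(n-1)\rangle$. Your additional remarks on standardness and the appeal to \cite[Lemma~6.3]{NCHPD} for the left decomposition are appropriate elaborations of points the paper leaves implicit.
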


\begin{proof}
Kapranov's semiorthogonal decomposition of the derived category of a smooth $m$-dimensional quadric~\cite{kapranov} gives
\begin{align*}
\Db(Q) &= \langle \cS_+, \cS_-, \cO, \cO(1), \dots \cO(m-1) \rangle, && \text{if $m$ is even;}\\
\Db(Q) &= \langle \cS, \cO, \cO(1), \dots \cO(m-1) \rangle, && \text{if $m$ is odd.}
\end{align*}
Thus, in the odd-dimensional case we obtain the required Lefschetz structure.
In the even-dimensional case we use~\eqref{eq:spinor-sequences-even} to rewrite the decomposition in form
\begin{equation*}
\Db(Q) = \langle \cS_+, \cO, \cS_+(1), \cO(1), \dots \cO(m-1) \rangle,
\end{equation*}
(or similarly with $\cS_+$ replaced by $\cS_-$) and also obtain the required Lefschetz structure. 
\end{proof}

\begin{remark} 
\label{remark-smooth-Q-lc}
Let $f \colon Q \to \bP(V)$ be a standard morphism of a smooth quadric $Q$. 
Then we always regard $Q$ as a Lefschetz variety using the 
center from Lemma~\ref{lemma-smooth-Q-lc}. 
If $\dim(Q) = 2d$ is even there are two spinor bundles~$\cS_+$ and~$\cS_-$, 
so there is an apparent choice involved in the Lefschetz structure of~$Q$. 
However, there exists an (noncanonical) automorphism $a$ of~$Q$ over~$\bP(V)$ such that $a^*(\cS_\pm) \simeq \cS_\mp$  
(corresponding to the automorphism of the Dynkin diagram of type~$\mathrm{D}_{d+1}$). 
The resulting autoequivalence~$a^*$ of~$\Db(Q)$ identifies the Lefschetz center of Lemma~\ref{lemma-smooth-Q-lc} 
defined by~$\cS = \cS_+$ with that defined by~$\cS = \cS_-$. 
Hence, if~$\dim(Q)$ is even, the structure of~$Q$ as a Lefschetz variety over~$\bP(V)$ is still uniquely determined, 
up to noncanonical equivalence. 
\end{remark}

\begin{remark}
\label{remark:other-spinor-center}
The Lefschetz center $\cQ_0$ of $\Db(Q)$ can be also written as 
\begin{equation*}
\cQ_0 = \langle \cO, {\cS'}^{\svee}  \rangle
\end{equation*}
where $\cS' = \cS$ if $\dim(Q)$ is not divisible by 4, and the other spinor bundle otherwise. 
This follows from the exact sequences~\eqref{eq:spinor-sequences-even} and~\eqref{eq:spinor-sequences-odd}
and the dualities~\eqref{eq:spinor-duality-even} and~\eqref{eq:spinor-duality-odd}.
\end{remark}

\section{Proof of Theorem~\ref{main-theorem}}
\label{section-proof} 

In this section, we prove Theorem~\ref{main-theorem}. 
We divide the proof into a number of steps, which we overview here. 
Most of the proof concerns case~\eqref{HPD-quadrics-1}.   
In this case~$\dim(Q) = 2d$, $f \colon Q \to \bP(V)$ is a divisorial embedding, 
and we aim to prove that the HP dual is given by $Q^{\svee} \subset \bP(\vV)$, 
the classical projective dual of~$Q$.
During the proof we actively use the machinery of Clifford algebras; 
we suggest~\cite{kuznetsov08quadrics} as a general reference for this subject.

We consider the universal hyperplane section~$\bH(Q) \subset Q \times \bP(\vV)$ as a family of quadrics 
\begin{equation}
\label{eq:hq-family}
h_Q \colon \bH(Q) \to \bP(V^{\svee}) 
\end{equation}
of dimension~$2d - 1$.
\begin{itemize}
\item 
In~\S\ref{step-1} we introduce a sheaf $\Cl_0(\cW)$ of Clifford algebras on $\bP(\vV)$ and use the fibration~$h_Q$
to isolate a semiorthogonal component of $\Db(\bH(Q))$ equivalent 
to the derived category~\mbox{$\Db(\bP(V^{\svee}),\Cl_0(\cW))$} of modules over~$\Cl_0(\cW)$. 
\item 
In~\S\ref{step-2} we use central reduction to rewrite $\Db(\bP(V^{\svee}),\Cl_0(\cW))$ 
as the $\bZ/2$-equivariant category $\Db(Z,\cR)^{\bZ/2}$ of the derived category of modules
over an Azumaya algebra~$\cR$ on the double covering~$Z$ of~$\bP(\vV)$ branched over the quadric $\vQ$.
\item 
In~\S\ref{step-3} we show that the Azumaya algebra~$\cR$ is Morita trivial, 
and thus obtain an identification $\Db(Z,\cR)^{\bZ/2} \simeq \Db(Z)^{\bZ/2}$ with the $\bZ/2$-equivariant derived category of~$Z$.
\item 
In~\S\ref{step-4} we decompose~$\Db(Z)^{\bZ/2}$ into two components:
$\Db(\vQ)$ and $\Db(\bP(\vV))$.

\item 
In~\S\ref{step-5} we rewrite in a simpler form the embedding functor of $\Db(\bP(\vV))$ into $\Db(\bH(Q))$. 

\item 
In~\S\ref{step-6} we check that the image of $\Db(\bP(\vV))$ together with the other components of 
the semiorthogonal decomposition discussed in~\S\ref{step-1} generate the ``Lefschetz part'' of~$\Db(\bH(Q))$, 
i.e. the subcategory of~$\Db(\bH(Q))$ generated by the components to the right of~$\Db(Q)^{\hpd}$ 
in the decomposition \eqref{eq:hpd-sod} for $X = Q$. 
This proves that the remaining component~$\Db(\vQ)$ is equivalent to the HPD category $\Db(Q)^\hpd$.
We check that this equivalence is given by a Fourier--Mukai functor with kernel $\cE$ on $\bH(Q) \times_{\bP(\vV)} \vQ$,
thus verifying part~\eqref{eq:hpd-prop-1} of the definition of HPD.
\item In \S\ref{step-7} we describe the kernel $\cE$ in terms of spinor bundles on $Q$ and $\vQ$.
\item 
In~\S\ref{step-9} we use this to check that 
the functor~$\Phi^* \circ \pi_Q^*$ takes the Lefschetz center of $Q$ to that of $\vQ$, so 
condition~\eqref{eq:hpd-prop-2} in the definition of the HP dual holds.
\end{itemize}

Finally, in \S\ref{step-10} we deduce the remaining cases~\eqref{HPD-quadrics-d-odd}--\eqref{HPD-quadrics-dc-odd} 
of Theorem~\ref{main-theorem} from case~\eqref{HPD-quadrics-1} by applying a result from~\cite{carocci2015homological}.

\bigskip

In \S\S\ref{step-1}--\ref{step-9}, we assume as above that $\dim(Q) = 2d$, $f \colon Q \to \bP(V)$ is a divisorial embedding,
and that the Lefschetz structure of $\Db(Q)$ is chosen so that $\cQ_0 = \langle \cS_+, \cO \rangle$. 
Let $H$ and $H'$ denote the hyperplane classes on $\bP(V)$ and $\bP(\vV)$,
as well as their pullbacks to $\bH(Q)$ and other varieties.

\subsection{A decomposition of $\Db(\bH(Q))$}
\label{step-1}

Consider the family of quadrics~\eqref{eq:hq-family}.
Recall that by definition $\bH(Q)$ is the zero locus of the tautological global section of the line bundle~$\cO(H + H')$ on $Q \times \bP(\vV)$.
Therefore, $\bH(Q)$ sits as a family of $(2d-1)$-dimensional quadrics 
in the projectivization of the rank $2d+1$ vector bundle on $\bP(\vV)$ 
\begin{equation*}
\cW := (h_{Q*}\cO_{\bH(Q)}(H))^{\svee} \cong 
\operatorname{coker}(\cO(-H') \to \vV \otimes \cO)^\vee \cong
\ker(V \otimes \cO \to \cO(H')) \cong \Omega_{\bP(\vV)}(H') , 
\end{equation*}
and is defined by the family of quadratic forms given by the composition
\begin{equation}
\label{eq:quadratic-forms}
\cO \to \Sym^2\vV \otimes \cO \to \Sym^2\cW^{\svee},
\end{equation}
where the first morphism is given by the equation of $Q$ and the second is the tautological surjection.
By~\cite[Theorem~4.2]{kuznetsov08quadrics} we have a semiorthogonal decomposition
\begin{multline}
\label{eq:sod-hq-1}
\Db(\cH(Q)) = \langle \Db(\bP(V^{\svee}),\Cl_0(\cW)) , \\ 
h_Q^*(\Db(\bP(V^{\svee})))(H), 
\dots, h_Q^*(\Db(\bP(V^{\svee})))((2d-1)H) \rangle,
\end{multline}
where $\Cl_0(\cW)$ is the sheaf of even parts of the universal Clifford algebra on $\bP(\vV)$ for this family of quadrics, 
and~$\Db(\bP(V^{\svee}),\Cl_0(\cW))$ is the bounded derived category of~$\Cl_0(\cW)$-modules on~$\bP(\vV)$.
The embedding of this category into $\Db(\bH(Q))$ is given by the functor
\begin{equation}
\label{eq:def-gamma}
\gamma \colon \Db(\bP(V^{\svee}),\Cl_0(\cW)) \to \Db(\cH(Q)),
\qquad 
\cF \mapsto h_Q^*\cF \otimes_{\Cl_0(\cW)} \CS,
\end{equation} 
where $\CS$ is the sheaf of $\Cl_0(\cW)$-modules on $\bH(Q)$ defined by the exact sequence
\begin{equation}
\label{eq:ce-res}
0 \to \cO(-H) \otimes \Cl_0(\cW) \to \cO \otimes \Cl_1(\cW) \to i_*\CS \to 0
\end{equation}
on $\bH \cong \bP_{\bP(\vV)}(\cW)$, where $i \colon \bH(Q) \hookrightarrow \bH$ is the natural embedding. 
Furthermore, $\Cl_1(\cW)$ is the pullback to $\bH$ of the sheaf of odd parts of the Clifford algebra,  
and the first morphism is induced by the Clifford multiplication. 

We call $\CS$ the \emph{Clifford spinor bundle}.
Note that the space $\bH$ is simply the universal hyperplane in $\bP(V)$.

\subsection{Central reduction}
\label{step-2}

Next, we use the argument of~\cite[\S3.6]{kuznetsov08quadrics} to describe the first component~$\Db(\bP(V^{\svee}),\Cl_0(\cW))$ 
of~\eqref{eq:sod-hq-1} in more detail.
We denote by $\Cl(V)$ the Clifford algebra of the quadric~$Q$,
and by $\Cl_0(V)$ and $\Cl_1(V)$ its even and odd parts.

The family of quadratic forms~\eqref{eq:quadratic-forms} is given by a morphism $\cO \to \Sym^2\cW^{\svee}$ from a trivial line bundle, 
hence the Clifford multiplication provides the sum $\Cl_0(\cW) \oplus \Cl_1(\cW)$ with an algebra structure. 
As a sheaf of $\cO$-modules it has rank $2^{2d+1}$ and can be written as 
\begin{equation*}
\Cl(\cW) = \Cl_0(\cW) \oplus \Cl_1(\cW) \cong \cO \oplus \cW \oplus \wedge^2\cW \oplus \dots \oplus \wedge^{2d+1}\cW \subset 
\Cl(V) \otimes \cO,
\end{equation*}
and is naturally a subalgebra in $\Cl(V) \otimes \cO$.
As explained in~\cite[\S3.6]{kuznetsov08quadrics}, the rank~2 subalgebra 
\begin{equation*}
\fZ = \fZ_0 \oplus \fZ_1 = \cO \oplus \wedge^{2d+1}\cW \subset \Cl(\cW)
\end{equation*}
is central (and moreover $\Cl(\cW)$ is the centralizer of $\fZ$ in $\Cl(V)$), and the morphism
\begin{equation*}
\zeta \colon Z = \Spec_{\bP(\vV)}(\fZ) \to \bP(\vV)
\end{equation*}
is the double covering branched along the projective dual quadric $Q^{\svee} \subset \bP(\vV)$.

Note that $Z$ is a smooth quadric of dimension~$2d + 1$.
We consider the $\bZ/2$-action on $Z$ generated by the involution of the double covering.
Note that it is induced by the natural~$\bZ/2$-grading of $\fZ$.
The sheaf of algebras $\Cl(\cW)$ is a module over $\fZ$, hence there is a sheaf of algebras $\cR$ of rank~$2^{2d}$ on $Z$ such that 
\begin{equation*}
\Cl(\cW) \cong \zeta_*\cR.
\end{equation*}
Furthermore, the direct sum decomposition $\Cl(\cW) = \Cl_0(\cW) \oplus \Cl_1(\cW)$ 
provides $\Cl(\cW)$ with the structure of a $\bZ/2$-graded $\fZ$-module, hence
provides $\cR$ with a~$\bZ/2$-equivariant structure.
By definition the invariant part of $\zeta_*\cR$ is
\begin{equation}
\label{eq:zeta-s-cr}
(\zeta_*\cR)^{\bZ/2} \cong \Cl_0(\cW),
\end{equation}
hence there is an equivalence of categories
\begin{equation}
\label{eq:functor-perf-z-cr-z2}
\phi \colon \Db(Z,\cR)^{\bZ/2} \xrightarrow{\, \sim \,} \Db(\bP(\vV),\Cl_0(\cW)),
\qquad 
\cF \mapsto (\zeta_*\cF)^{\bZ/2}, 
\end{equation}
between the $\bZ/2$-equivariant derived category of $\cR$-modules on $Z$ and the derived category of~$\Cl_0(\cW)$-modules on $\bP(\vV)$.

\subsection{Morita triviality of $\cR$}
\label{step-3}

The sheaf of algebras $\cR$ is Azumaya by~\cite[Proposition~3.15]{kuznetsov08quadrics}. 
We claim it is in fact Morita trivial.
Indeed, let $\sS_+$ and $\sS_-$ be the two {$2^d$-dimensional half-spinor modules for $\Cl_0(V)$
(which appeared earlier as the half-spinor representations of~$\Spin(Q)$).
Then the sum 
\begin{equation}
\label{eq:spinor-sum}
\sS = \sS_+ \oplus \sS_-
\end{equation} 
is naturally a $\Cl(V)$-module, and hence by restriction a $\Cl(\cW)$-module as well.
In particular, it is a $\fZ$-module, hence gives a vector bundle on $Z$.
Moreover, the action of $\fZ_0$ preserves the summands, and the action of $\fZ_{1}$ swaps them, so
thinking of the direct sum decomposition~\eqref{eq:spinor-sum} as a $\bZ/2$-grading, 
we see that this provides $\sS$ with the structure of a $\bZ/2$-equivariant~$\fZ$-module,
i.e. a $\bZ/2$-equivariant vector bundle on~$Z$.
Since~$\sS$ is also a $\Cl(\cW)$-module,
we see that there is an object $\cS_Z^{\svee}$ of $\Db(Z,\cR)^{\bZ/2}$ such that
\begin{equation}
\label{eq:zeta-szv-z2}
\zeta_*\cS_Z^{\svee} \cong (\sS_+ \oplus \sS_-) \otimes \cO_{\bP(\vV)}
\qquad\text{and}\qquad 
(\zeta_*\cS_Z^{\svee})^{\bZ/2} \cong \sS_+ \otimes \cO_{\bP(\vV)}.
\end{equation}
Actually, $\cS_Z^{\svee}$ is the dual spinor bundle (of rank $2^d$) on the smooth odd-dimensional quadric~$Z$ (of dimension~$2d+1$), 
hence the notation.
Indeed, this follows easily from the Kapranov's semiorthogonal decomposition of the quadric~$Z$ 
(see the proof of Lemma~\ref{lemma-smooth-Q-lc}) since by~\eqref{eq:zeta-szv-z2} 
the sheaf~$\cS_Z^{\svee}$ is semiorthogonal to $\cO_Z(1)$, $\cO_Z(2)$, \dots, $\cO_Z(2d+1)$.

Since the bundle $\cS_Z^{\svee}$ is an equivariant $\cR$-module on $Z$, 
we have a natural equivariant morphism $\cR \to \operatorname{\mathcal{E}\!\mathit{nd}}(\cS_Z^{\svee})$,
which is fiberwise injective because $\cR$ is an Azumaya algebra, and hence is an isomorphism 
because the ranks of the source and the target are both equal to $2^{2d}$.
Consequently, we have an equivalence
\begin{equation}
\label{eq:functor-perf-z-z2}
\mu \colon \Db(Z)^{\bZ/2} \xrightarrow{\, \sim \,} \Db(Z,\cR)^{\bZ/2},
\qquad 
\cF \mapsto \cF \otimes \cS_Z^{\svee}.
\end{equation} 

\subsection{Root stack decomposition}
\label{step-4}

Finally, the equivariant category $\Db(Z)^{\bZ/2}$ can be considered as the derived category 
of the quotient stack $[Z / (\bZ/2)]$, i.e. of the root stack of~$\bP(\vV)$ along $Q^{\svee}$, 
and consequently by~\cite[Theorem~4.1]{cyclic-covers} (see also~\cite{collins2010polishchuk} and~\cite[Theorem~1.6]{ishii2015ueda})
it has a semiorthogonal decomposition
\begin{equation}
\label{eq:sod-root}
\Db(Z)^{\bZ/2} = \langle \Db(Q^{\svee}), \Db(\bP(\vV)) \rangle,
\end{equation}
with the embedding functors given by 
\begin{alignat}{2}
\label{eq:alpha-q}
\alpha_Q &\colon \Db(Q^{\svee}) \to \Db(Z)^{\bZ/2},
&& \cF \mapsto j_*\cF \otimes \chi,
\intertext{where $j \colon Q^{\svee} \to Z$ is the embedding of the ramification divisor 
and $\chi$ is the nontrivial character of the group~$\bZ/2$, and by}
\label{eq:alpha-p}
\alpha_P &\colon \Db(\bP(\vV)) \to \Db(Z)^{\bZ/2}, \qquad 
 && \cF \mapsto \zeta^*\cF,
\end{alignat}
where $\zeta^*\cF$ is given the natural equivariant structure.

Combining~\eqref{eq:sod-hq-1} with \eqref{eq:functor-perf-z-cr-z2}, \eqref{eq:functor-perf-z-z2}, 
and \eqref{eq:sod-root}, we obtain a $\bP(\vV)$-linear 
semiorthogonal decomposition
\begin{multline}
\label{eq:sod-hq-2}
\Db(\bH(Q)) = \langle \Phi(\Db(Q^{\svee})), \Psi(\Db(\bP(\vV))) ,\\ 
h_Q^*(\Db(\bP(V^{\svee})))(H), \dots, h_Q^*(\Db(\bP(V^{\svee})))((2d-1)H) \rangle .
\end{multline}
The embedding functors $\Phi$ and $\Psi$ of the first two components are discussed below.

\subsection{Rewriting the functor $\Psi$} 
\label{step-5}

According to the construction in~\S\S\ref{step-1}--\ref{step-4} above,
the second component of~\eqref{eq:sod-hq-2} is embedded by the functor 
\begin{equation*}
\Psi = \gamma \circ \phi \circ \mu \circ \alpha_P \colon \Db(\bP(\vV)) \to \Db(\bH(Q)),
\end{equation*}  
where the factors are defined by~\eqref{eq:def-gamma}, \eqref{eq:functor-perf-z-cr-z2}, \eqref{eq:functor-perf-z-z2}, 
and~\eqref{eq:alpha-p}.
Note that each of the factors is a Fourier--Mukai functor, hence so is their composition $\Psi$.
Below we describe its kernel object.
We consider the commutative diagram 
\begin{equation}
\label{diagram-HQ-PW}
\vcenter{\xymatrix{
&& Z \ar[d]^\zeta
\\
\bH(Q) \ar[r]^-{i} \ar[d]_{{\pi_Q}} \ar@/^4ex/[rr]^-{h_Q} & 
\bH \ar[r]^-{h} \ar[d]^{{\pi}}  & 
\bP(\vV)  
\\ 
Q \ar[r]^{f} & 
\bP(V) 
}}
\end{equation} 
with cartesian square. 
The functor $\Psi$
is given by
\begin{align*}
\cF 
&\mapsto {h_Q^*}{\left( \zeta_*\big(\zeta^*\cF \otimes {\cS_Z^{\svee}}\big)^{\bZ/2} \right)} \otimes _{\Cl_0(\cW)} \CS \\
& \cong  {h_Q^*}{ \left( \cF \otimes (\zeta_*{\cS_Z^{\svee}})^{\bZ/2} \right)} \otimes _{\Cl_0(\cW)} \CS \\
& \cong  ({h_Q^*}\cF \otimes \sS_+) \otimes _{\Cl_0(\cW)} \CS , 
\end{align*}
where we used the equivariant projection formula 
for the first isomorphism, and~\eqref{eq:zeta-szv-z2} for the second. 
{This means that the Fourier--Mukai kernel for $\Psi$ is the object 
\begin{equation*}
\sS_+ \otimes _{\Cl_0(\cW)} \CS \in \Db(\bH(Q)).
\end{equation*}
To compute it we use the resolution~\eqref{eq:ce-res}, and obtain on~$\bH$ a distinguished triangle} 
\begin{multline}
\label{iPhicF-triangle}
\sS_+ \otimes _{\Cl_0(\cW)} (\cO(-H) \otimes \Cl_0(\cW)) \to
\sS_+ \otimes _{\Cl_0(\cW)} (\cO \otimes \Cl_1(\cW)) \\
\to i_*(\sS_+ \otimes _{\Cl_0(\cW)} \CS)
\end{multline} 
with the first map induced by the Clifford multiplication.
The first term is evidently isomorphic to~{$\sS_+ \otimes \cO(-H)$}.
For the second note that
\begin{equation}
\label{eq:spm-tensor-cl1}
\begin{aligned}
\sS_+ \otimes_{\Cl_0(\cW)} \Cl_1(\cW) & \cong 
{\sS_+ \otimes_{\Cl_0(V)} (\Cl_0(V) \otimes_{\Cl_0(\cW)} \Cl_1(\cW))} \\
& \cong \sS_+ \otimes_{\Cl_0(V)} \Cl_1(V) \\ 
& \cong \sS_-.  
\end{aligned}
\end{equation}
Here the first isomorphism is evident.
The second is induced by Clifford multiplication; its surjectivity is evident, 
and its injectivity follows from the fact that $\Cl_1(\cW)$ is locally projective over $\Cl_0(\cW)$, see~\cite[Lemma~3.8]{kuznetsov08quadrics}.
Finally,
the last isomorphism follows from the standard isomorphisms 
\begin{equation*}
\Cl_0(V) \cong \End(\sS_+) \oplus \End(\sS_-) 
\quad \text{and} \quad \Cl_1(V) \cong \Hom(\sS_+,\sS_-) \oplus \Hom(\sS_-,\sS_+) . 
\end{equation*}
Hence the second term in \eqref{iPhicF-triangle} is isomorphic to~$\sS_- \otimes \cO$.
Thus, we can rewrite~\eqref{iPhicF-triangle} as
\begin{equation}
\label{iPhicF-triangle-rewritten}
{\sS_+ \otimes \cO(-H) \to \sS_- \otimes \cO \to i_*(\sS_+ \otimes _{\Cl_0(\cW)} \CS)}
\end{equation}
with the first map induced by the Clifford multiplication.

On the other hand, on $\bP(V)$ we have exact sequences~\eqref{eq:spinor-sequence-pn}.
Pulling the second of them back via $\pi \colon \bH \to \bP(V)$ and
using the base change isomorphism for the square in diagram~\eqref{diagram-HQ-PW},
we deduce an isomorphism
\begin{equation*}
i_*(\sS_+ \otimes _{\Cl_0(\cW)} \CS) \cong \pi^*(f_*(\cS_+(H))) \cong i_*(\pi_Q^*(\cS_+(H))).
\end{equation*}
Since $i$ is a closed embedding and $\pi_Q^*(\cS_+(H))$ is a coherent sheaf, 
it follows that
\begin{equation*}
\sS_+ \otimes _{\Cl_0(\cW)} \CS \cong \pi_Q^*(\cS_+(H)).
\end{equation*} 
In summary, we conclude that the second component of the decomposition~\eqref{eq:sod-hq-2} 
is embedded via the functor 
\begin{equation}
\label{equation-embedding-Pv-HQ}
\Psi \colon \Db(\bP(\vV)) \to \Db(\bH(Q)), \quad \cF \mapsto {h_Q^*}\cF \otimes {\pi_Q^*}(\cS_+(H)). 
\end{equation} 

\subsection{An equivalence between $\Db(\vQ)$ and $\Db(Q)^\hpd$}
\label{step-6}

Next, we relate the functor~$\Psi$ to the decomposition~\eqref{eq:hpd-sod} of~$\bH(Q)$.
Note that this decomposition takes the form 
\begin{multline}
\label{eq:hpd-sod-q}
\Db(\cH(Q)) = \langle \Db(Q)^\hpd, \delta^*(\cQ_1(H) \boxtimes \Db(\bP(\vV))), \\
\delta^*(\cQ_2(2H)  \boxtimes \Db(\bP(V^{\svee}))), \dots, \delta^*(\cQ_{2d-1}((2d-1)H)  \boxtimes \Db(\bP(V^{\svee}))) \rangle,
\end{multline}
where $\cQ_1 = \langle {\cS_+}, \cO \rangle$ and $\cQ_2 = \dots = \cQ_{2d-1} = \langle \cO \rangle$ are 
the Lefschetz components of~$\Db(Q)$ given by Lemma~\ref{lemma-smooth-Q-lc}. 
Since the sheaf $\cS_+(H)$ is one of the two exceptional objects generating~$\cQ_1(H)$, 
the image of the functor~$\Psi$} 
is contained in the component $\delta^*(\cQ_1(H) \boxtimes \Db(\bP(\vV)))$ of~\eqref{eq:hpd-sod-q}. 
Furthermore, it follows that
we can rewrite the components in the Lefschetz part of~\eqref{eq:hpd-sod-q} as
\begin{align*}
\delta^*(\cQ_1(H) \boxtimes \Db(\bP(\vV))) &= \langle \Psi(\Db(\bP(\vV))), h_Q^*(\Db(\bP(V^{\svee})))(H) \rangle, && \text{and}\\
\delta^*(\cQ_i(H) \boxtimes \Db(\bP(\vV))) &= h_Q^*(\Db(\bP(V^{\svee})))(iH), && \text{for $i \ge 2$.} 
\end{align*}
Comparing this with the decomposition~\eqref{eq:sod-hq-2}, we conclude that there is a $\bP(\vV)$-linear equivalence 
\begin{equation}
\label{Qv-Q-hpd}
\Db(Q^{\svee}) \simeq \Db(Q)^\hpd  .
\end{equation}
This equivalence is induced by the 
functor 
\begin{equation*}
\Phi = \gamma \circ \phi \circ \mu \circ \alpha_Q \colon \Db(Q^{\svee}) \to \Db(\bH(Q)), 
\end{equation*} 
where the factors are defined by~\eqref{eq:def-gamma}, \eqref{eq:functor-perf-z-cr-z2}, \eqref{eq:functor-perf-z-z2}, 
and~\eqref{eq:alpha-q}.
Note that each of the factors is a Fourier--Mukai functor, hence so is their composition $\Phi$. 
This shows that condition~\eqref{eq:hpd-prop-1} is fulfilled.

Let us describe the Fourier--Mukai kernel for $\Phi$ explicitly.
We consider the commutative diagram 
\begin{equation}
\label{eq:cd-phi}
\vcenter{\xymatrix{
\bH(Q, \vQ) \ar[r]^-{{\tilde{h}}} \ar[d]_-{{\tilde{g}}} & \vQ \ar[d]^-{g} \ar@/^2ex/[dr]^j \\ 
\bH(Q) \ar[r]^{{h_Q}} & \bP(\vV) & Z \ar[l]_-\zeta
}}
\end{equation} 
with cartesian square,
where $g \colon Q^{\svee} \to \bP(\vV)$ is the inclusion of the branch divisor of the double covering~$\zeta \colon Z \to \bP(\vV)$
and~$\bH(Q, \vQ)$ is the fiber product.
The functor~$\Phi$ is given by 
\begin{align}
\nonumber \cF & \mapsto {h_Q^*}{\left(\zeta_*\big((j_*\cF \otimes \chi) \otimes {\cS_Z^{\svee}}\big)^{\bZ/2} \right)} \otimes _{\Cl_0(\cW)} \CS \\
\nonumber
& \cong {h_Q^*}{\left(\zeta_*j_*\big(\cF \otimes j^*({\cS_Z^{\svee}} \otimes \chi)^{\bZ/2}\big) \right)} \otimes _{\Cl_0(\cW)} \CS \\
\nonumber
& \cong {h_Q^*}g_*{\left(\cF \otimes (j^*{\cS_Z^{\svee}} \otimes \chi)^{\bZ/2} \right)} \otimes _{\Cl_0(\cW)} \CS \\
\nonumber
& \cong \tilde{g}_*\tilde{h}^*{\left(\cF \otimes (j^*{\cS_Z^{\svee}} \otimes \chi)^{\bZ/2} \right)} \otimes _{\Cl_0(\cW)} \CS, 
\end{align}
where the first isomorphism is the equivariant projection formula, the second is evident, and the third is base change. 
This means that the kernel object for $\Phi$ is isomorphic to
\begin{equation}
\label{eq:ck-first}
\cE := \tilde{h}^*\Big((j^*{\cS_Z^{\svee}} \otimes \chi)^{\bZ/2}\Big) \otimes _{\Cl_0(\cW)} \tilde{g}^*\CS
\end{equation}
on $\bH(Q, \vQ)$.

To complete the proof of case~(1) of Theorem~\ref{main-theorem}, 
we will rewrite the formula for the kernel object~$\cE$ and then use it
to verify condition~\eqref{eq:hpd-prop-2}.

\subsection{Rewriting the kernel $\cE$} 
\label{step-7}
Note that by definition from~\S\ref{step-3} the bundle $j^*{\cS_Z^{\svee}}$ in~\eqref{eq:ck-first} is the cokernel of the natural map 
\begin{equation*}
(\sS_+ \oplus \sS_-) \otimes g^*\fZ_1  \to (\sS_+ \oplus \sS_-) \otimes \cO 
\end{equation*}
induced by the action of $\fZ_1 \subset \Cl_1(\cW) \subset \Cl_1(V) \otimes \cO$ on $(\sS_+ \oplus \sS_-) \otimes \cO$. 
Since this action swaps the grading and~$\fZ_1 \cong \det(\cW) \cong \cO(-H')$, 
it follows that on $\bP(\vV)$ we have two exact sequences 
\begin{align*}
&	0 \to \sS_- \otimes \cO(-H') \to \sS_+ \otimes \cO \to g_*((j^*{\cS_Z^{\svee}})^{\bZ/2}) \to 0,
\\
&0 \to \sS_+ \otimes \cO(-H') \to \sS_- \otimes \cO \to g_*((j^*{\cS_Z^{\svee}} \otimes \chi)^{\bZ/2}) \to 0 , 
\end{align*}
with the first morphisms given by the Clifford multiplication.
Comparing these sequences with~\eqref{eq:spinor-sequence-pn} for the spinor bundles $\cS'_+$ and $\cS'_-$ on $\vQ \subset \bP(\vV)$, 
we obtain isomorphisms 
\begin{equation*}
(j^*{\cS_Z^{\svee}})^{\bZ/2} \cong \cS'_-(H')
\qquad\text{and}\qquad 
(j^*{\cS_Z^{\svee}} \otimes \chi)^{\bZ/2} \cong \cS'_+(H').
\end{equation*}
Combining this with the formula~{\eqref{eq:ck-first} for~$\cE$, we find that $\cE$ can be rewritten as follows: 
\begin{equation}
\label{eq:hpd-kernel}
\cE = \tilde{h}^*\cS'_+(H') \otimes _{\Cl_0(\cW)} {\tilde{g}^*}\CS \in \Db(\bH(Q, \vQ)).
\end{equation}

To rewrite this further, we consider the space $\bH(\bP(V), \vQ) = \bH \times_{\bP(\vV)} \vQ$, 
which fits into a commutative diagram
\begin{equation*}
\xymatrix@C=5em{
\bH(Q,\vQ) \ar[r]_{{\tilde{\imath}}} \ar[d]^{{\tilde{g}}} \ar@/^4ex/[rr]^-{\tilde{h}} \ar@/_3em/[dd]_-{\tilde{\pi}} &
\bH(\bP(V),\vQ) \ar[d]_{g_\bH} \ar[r]_-{\pr_2} \ar@/^3em/[dd]^-{\pr_1} &
\vQ
\\
\bH(Q) \ar[r]^{i} \ar[d]^{\pi_Q} &
\bH \ar[d]_\pi
\\
Q \ar[r]^f
& \bP(V)
}
\end{equation*}
where the squares are cartesian by the definitions of $\bH(Q)$ and $\bH(Q, \vQ)$.
Using the projection formula and base change we compute
\begin{equation*}
\tilde{\imath}_*\cE 
\cong 
\pr_2^*(\cS'_+(H')) \otimes_{\Cl_0(\cW)} \tilde{\imath}_*{\tilde{g}^*}\CS 
\cong 
\pr_2^*(\cS'_+(H')) \otimes_{\Cl_0(\cW)} g_\bH^*(i_*\CS).
\end{equation*}
Using the resolution~\eqref{eq:ce-res} of $i_*\CS$ and taking into account that $\cS'_+ \otimes_{\Cl_0(\cW)} \Cl_1(\cW) \cong \cS'_-$ 
(which follows from~\eqref{eq:spm-tensor-cl1} and the resolutions~\eqref{eq:spinor-sequence-pn} for $\cS'_\pm$),
we obtain an exact sequence
\begin{equation*}
0 \to \pr_1^*\cO_{\bP(V)}(-H) \otimes \pr_2^*\cS'_+(H') \to \pr_1^*\cO_{\bP(V)} \otimes \pr_2^*\cS'_-(H') \to \tilde{\imath}_*\cE \to 0
\end{equation*} 
on $\bH(\bP(V),\vQ)$, where 
the first map is induced by the Clifford multiplication.
It follows that~$\cE$ is a sheaf on $\bH(Q,\vQ)$, 
which fits into an exact sequence 
\begin{equation*}
{\tilde{\pi}}^*\cO_{Q}(-H) \otimes {\tilde{h}}^*\cS'_+(H') \to {\tilde{\pi}}^*\cO_{Q} \otimes {\tilde{h}}^*\cS'_-(H') \to \cE \to 0,
\end{equation*}
where 
the first map is induced by the Clifford multiplication.
Consider the diagram
\begin{equation*}
\xymatrix{
\sS_- \otimes {\tilde{\pi}}^*\cO_{Q}(-H) \otimes {\tilde{h}}^*\cO_{\vQ} \ar[r] \ar@{->>}[d] &
\sS_+ \otimes {\tilde{\pi}}^*\cO_{Q} \otimes {\tilde{h}}^*\cO_{\vQ} \ar@{->>}[d] 
\\
{\tilde{\pi}}^*\cO_{Q}(-H) \otimes {\tilde{h}}^*\cS'_+(H') \ar[r] &
{\tilde{\pi}}^*\cO_{Q} \otimes {\tilde{h}}^*\cS'_-(H'),
}
\end{equation*}
where the vertical arrows are induced by~\eqref{eq:spinor-sequence-pn} (hence surjective),
and the horizontal arrows are induced by the Clifford multiplication (hence the diagram commutes). 
By~\eqref{eq:spinor-sequences-even} the image of 
the top horizontal arrow is the bundle $\tilde{\pi}^*\cS_+ \otimes \tilde{h}^*\cO_{\vQ}$ on $\bH(Q,\vQ)$, 
hence we obtain an exact sequence
\begin{equation*}
{\tilde{\pi}}^*\cS_+ \otimes {\tilde{h}}^*\cO_{\vQ} \to {\tilde{\pi}}^*\cO_Q \otimes {\tilde{h}}^*\cS'_-(H') \to \cE \to 0
\end{equation*}
on $\bH(Q,\vQ)$.
Therefore, we have on $Q \times \vQ$ an exact sequence 
\begin{equation}
\label{eq:resolution-delta-psi}
0 \to \cS_+ \boxtimes \cO_{\vQ} \to \cO_Q \boxtimes \cS'_-(H') \to \eps_*\cE \to 0,
\end{equation}
where $\eps \colon \bH(Q,\vQ) \to Q \times \vQ$ is the natural embedding.

\subsection{Lefschetz center}
\label{step-9}

To complete the proof of Theorem~\ref{main-theorem} in case~\eqref{HPD-quadrics-1}, 
we verify condition~\eqref{eq:hpd-prop-2} 
i.e. check that the functor~${\Phi}^* \circ {\pi_Q^*} \colon \Db(Q) \to \Db(\vQ)$ takes the 
Lefschetz center~$\cQ_0$ of~$Q$ (see Lemma~\ref{lemma-smooth-Q-lc}) to that of~$\vQ$. 
It suffices to check that ${\Phi}^* \circ {\pi_Q^*}$ takes 
the generators $\cO_Q$ and $\cS_+$ of $\cQ_0$
to some generators of the Lefschetz center of $\Db(\vQ)$.

For this, note that 
\begin{equation*}
{\pi_{Q*}} \circ {\Phi} \colon \Db(\vQ) \to \Db(Q) 
\end{equation*}
is the Fourier--Mukai functor given by the kernel $\eps_* \cE \in \Db(Q \times \vQ)$, 
so its left adjoint functor~${\Phi}^* \circ {\pi^*_Q}$ is given by the kernel $(\eps_*\cE)^{\svee} \otimes {p}^* \omega_Q$, 
where ${p} \colon Q \times \vQ \to Q$ is the projection and~$\omega_Q$ is the dualizing complex of~$Q$. 
By~\eqref{eq:resolution-delta-psi} we have a distinguished triangle 
\begin{equation*}
(\eps_*\cE)^{\svee} \otimes {p}^* \omega_Q \to \omega_Q \boxtimes \cS_-^{\prime\svee}(-H') 
\to (\cS_+^{\svee} \otimes \omega_Q) \boxtimes \cO_{\vQ} . 
\end{equation*} 
It follows that $\Phi^*(\pi_Q^*(\cQ_0))$ is the subcategory
generated by~$\cS_-^{\prime\svee}(-H')$ and~$\cO_{\vQ}$.
From~\eqref{eq:spinor-duality-even} it follows that $\cS_-^{\prime\svee}(-H') \cong \cS'_\pm$ (depending on parity of~$d$),
hence this subcategory coincides with the Lefschetz center of $\Db(\vQ)$.
This proves condition~\eqref{eq:hpd-prop-2}, and since together with condition~\eqref{eq:hpd-prop-1} proved in~\S\ref{step-6} it is equivalent to the HPD statement, this
completes the proof of case~\eqref{HPD-quadrics-1} of Theorem~\ref{main-theorem}. 

\subsection{Other types of quadrics}
\label{step-10}

We deduce the other cases of Theorem~\ref{main-theorem} 
by using a general result on the behavior of HPD under linear projection. 
This result can be phrased in simple terms by saying that linear projections on one side of HPD 
correspond to taking hyperplane sections on the other.
The following rigorous formulation of this result is a special case of~\cite[Theorem~1.1]{carocci2015homological};  
see also~\cite[Proposition~A.10 and Remark~A.12]{categorical-cones} for a quick proof in the context considered here. 

\begin{theorem}
\label{theorem-HPD-projection}
Let $\tf \colon X \to \bP(\tV)$ be a Lefschetz variety with Lefschetz center~$\cA_0$. 
Let $\tV \to V$ be a surjection with kernel $K$ such that $\tf^{-1}(\bP(K))$ is empty, 
so that the composition
\begin{equation*}
f \colon X \xrightarrow{\ \tf\ } \bP(\tV) \dashrightarrow \bP(V)
\end{equation*}
is a regular morphism providing~$X$ with the structure of a Lefschetz variety over $\bP(V)$ with Lefschetz center~$\cA_0$.
Assume that the Lefschetz variety $f \colon X \to \bP(V)$ 
is moderate. 

If $\tf^{\hpd} \colon X^{\hpd} \to \bP(\tV^{\svee})$ is a Lefschetz variety with Lefschetz center~$\cB_0$
which is HP dual to~\mbox{$\tf \colon X \to \bP(\tV)$}, then the fiber product
\begin{equation*}
f^{\hpd} \colon X^{\hpd} \times_{\bP(\tV^{\svee})} \bP(\vV) \to \bP(\vV) 
\end{equation*} 
obtained by base change along the natural embedding $\bP(\vV) \subset \bP(\tV^{\svee})$ 
has the structure of a Lefschetz 
variety with Lefschetz center the image of $\cB_0$ under the restriction functor 
\begin{equation*}
\Db(X^{\hpd}) \to \Db(X^{\hpd} \times_{\bP(\tV^{\svee})} \bP(\vV)). 
\end{equation*}
Moreover, when equipped with this Lefschetz structure, $f^{\hpd} \colon X^{\hpd} \times_{\bP(\tV^{\svee})} \bP(\vV) \to \bP(\vV)$ 
is HP dual to $f \colon X \to \bP(V)$.
\end{theorem}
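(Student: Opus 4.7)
The plan is to exploit the compatibility of the HPD construction with base change along the linear embedding $\iota\colon \bP(\vV) \hookrightarrow \bP(\tV^{\svee})$ induced by the surjection $\tV \twoheadrightarrow V$. First, I would verify the geometric identification
\[
\bH(X) \;\cong\; \bH_{\tV}(X) \times_{\bP(\tV^{\svee})} \bP(\vV),
\]
where $\bH_{\tV}(X) = X \times_{\bP(\tV)} \tilde{\bH}$ is the universal hyperplane section with respect to~$\tf$. This holds because points of $\bP(\vV) \subset \bP(\tV^{\svee})$ parametrize hyperplanes of $\bP(\tV)$ containing~$\bP(K)$, and since $\tf^{-1}(\bP(K))$ is empty, pulling back such a hyperplane to $X$ via $\tf$ coincides with pulling back the corresponding hyperplane of $\bP(V)$ via~$f$.

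Next, I would compare HPD categories. Applying the $\bP(\tV^{\svee})$-linear semiorthogonal decomposition~\eqref{eq:hpd-sod} for $\tf$ and base changing it along $\iota$ yields a $\bP(\vV)$-linear semiorthogonal decomposition of $\Db(\bH(X))$ whose Lefschetz components match those appearing in~\eqref{eq:hpd-sod} applied to $f$; its first (``HPD'') component is therefore~$\Db(X)^{\hpd}$. The HPD assumption for $\tf$ supplies a Fourier--Mukai kernel $\tilde{\cE} \in \Db(\bH_{\tV}(X) \times_{\bP(\tV^{\svee})} X^{\hpd})$, and restricting it along~$\iota$ produces a kernel
\[
\cE \in \Db\!\left( \bH(X) \times_{\bP(\vV)} (X^{\hpd} \times_{\bP(\tV^{\svee})} \bP(\vV)) \right).
\]
By compatibility of Fourier--Mukai functors with base change, $\Phi_{\cE}$ descends to an equivalence $\Db(X^{\hpd} \times_{\bP(\tV^{\svee})} \bP(\vV)) \simeq \Db(X)^{\hpd}$, establishing~\eqref{eq:hpd-prop-1}. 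For~\eqref{eq:hpd-prop-2}, the formula~\eqref{cA0-hpd} for the canonical Lefschetz center of an HPD category is itself natural under base change along~$\iota$, so the restriction of $\cB_0$ to the fiber product corresponds via $\Phi_{\cE}^{*}$ to the canonical Lefschetz center of $\Db(X)^{\hpd}$.

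The main obstacle is the base-change step in the second paragraph: proving that HPD commutes with base change along linear embeddings of the dual projective space. This requires carefully verifying that the subcategories $\delta^{*}(\cA_i(i) \boxtimes \Db(\bP(\tV^{\svee})))$ in~\eqref{eq:hpd-sod} restrict to their analogues over~$\bP(\vV)$, and that the characterization of $\Db(X)^{\hpd}$ as the left orthogonal to them is preserved under $\bar{\iota}^{*}$. The moderateness hypothesis for~$f$ then ensures that enough room remains for the restricted Lefschetz components to furnish a genuine Lefschetz structure on $X^{\hpd} \times_{\bP(\tV^{\svee})} \bP(\vV)$, completing the identification as the HP dual of~$f$.
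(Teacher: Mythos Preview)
The paper does not actually prove Theorem~\ref{theorem-HPD-projection}; it is quoted as a special case of \cite[Theorem~1.1]{carocci2015homological}, with an alternative proof referenced in \cite[Proposition~A.10 and Remark~A.12]{categorical-cones}. So there is no ``paper's own proof'' to compare against here---only the cited arguments.

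That said, your outline is essentially the argument given in those references: identify $\bH(X)$ as the base change of $\bH_{\tV}(X)$ along $\iota\colon \bP(\vV)\hookrightarrow\bP(\tV^{\svee})$, base change the $\bP(\tV^{\svee})$-linear decomposition~\eqref{eq:hpd-sod}, and restrict the HPD kernel. The one step you pass over too quickly is the sentence ``By compatibility of Fourier--Mukai functors with base change, $\Phi_{\cE}$ descends to an equivalence.'' Compatibility of kernels with base change only tells you that the base-changed functor is again a Fourier--Mukai functor with the expected kernel; it does \emph{not} by itself guarantee that full faithfulness or essential surjectivity onto the HPD component survive. What you need here is the base change theorem for $\bP(\tV^{\svee})$-linear semiorthogonal decompositions (as in \cite[\S2.3]{kuznetsov-hpd} or its enhanced versions in \cite{NCHPD}): since $\Phi_{\tilde{\cE}}$ realizes $\Db(X^{\hpd})$ as an admissible $\bP(\tV^{\svee})$-linear component of $\Db(\bH_{\tV}(X))$, the entire semiorthogonal decomposition---including that component and its embedding functor---base changes to a semiorthogonal decomposition over $\bP(\vV)$. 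Once you invoke this, the rest of your argument goes through, and your verification of~\eqref{eq:hpd-prop-2} via naturality of~\eqref{cA0-hpd} is correct.
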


Let us prove case~\eqref{HPD-quadrics-dc-even}.
Let $f \colon Q \to \bP(V)$ be a double covering of an even-dimensional quadric~$Q$.
Choose an embedding $\tilde{f} \colon Q \to \bP(\tV)$ as a hypersurface.
Then $f$ is the composition of $\tilde{f}$ with a linear projection~$\bP(\tV) \dashrightarrow \bP(V)$ 
from a point of $\bP(\tV)$ which does not lie on~$\tilde{f}(Q)$.
Let $K \subset \tV$ be the corresponding 1-dimensional subspace, so that $V = \tV/K$.

By case~\eqref{HPD-quadrics-1} of Theorem~\ref{main-theorem} proved in~\S\S\ref{step-1}--\ref{step-9}, 
we know that $\vQ \subset \bP(\tV^{\svee})$ is HP dual to~$Q \subset \bP(\tV)$.
Therefore, by Theorem~\ref{theorem-HPD-projection} we find that 
$\vQ \times_{\bP(\tV^{\svee})} \bP(\vV) \to \bP(\vV)$ is HP dual to $Q \to \bP(V)$.
Note that $\vQ \times_{\bP(\tV^{\svee})} \bP(\vV)$ is projectively dual 
to the branch divisor of~\mbox{$Q \to \bP(V)$}.
This proves case~\eqref{HPD-quadrics-dc-even} of Theorem~\ref{main-theorem}. 

Since the operation of HPD is a duality (see \cite[Theorem 7.3]{kuznetsov-hpd} or \cite[Theorem 8.9]{NCHPD}), 
this also proves case~\eqref{HPD-quadrics-d-odd} of the theorem.

Finally, let us prove case~\eqref{HPD-quadrics-dc-odd}.
Let $f \colon Q \to \bP(V)$ be a double covering of an odd-dimensional quadric~$Q$.
Choose an embedding $\tilde{f} \colon Q \to \bP(\tV)$ as a hypersurface.
Then $f$ is the composition of $\tilde{f}$ with a linear projection~$\bP(\tV) \dashrightarrow \bP(V)$ 
from a point of $\bP(\tV)$ which does not lie on~$\tilde{f}(Q)$.
Let $K \subset \tV$ be the corresponding 1-dimensional subspace, so that $V = \tV/K$.

By case~\eqref{HPD-quadrics-d-odd} of Theorem~\ref{main-theorem} proved above, 
the double cover $\cov{(\vQ)} \to \bP(\tV^{\svee})$ branched along the projective dual $\vQ \subset \bP(\tV^{\svee})$ 
is HP dual to $Q \subset \bP(\tV)$. 
Thus, using Theorem~\ref{theorem-HPD-projection} again,
we find that $\cov{(\vQ)} \times_{\bP(\tV^{\svee})} \bP(\vV) \to \bP(\vV)$ is HP dual to 
$Q \to \bP(V)$. 
Note that the variety $\vQ \times_{\bP(\tV^{\svee})} \bP(\vV)$ is projectively dual
to the branch divisor of $Q \to \bP(V)$, and 
that~$\cov{(\vQ)} \times_{\bP(\tV^{\svee})} \bP(\vV) \to \bP(\vV)$ is the double cover branched along $\vQ \times_{\bP(\tV^{\svee})} \bP(\vV)$. 
This proves case~\eqref{HPD-quadrics-dc-odd} of Theorem~\ref{main-theorem}. 
\qed 

\begin{remark}
\label{remark:hpd-kernels}
We finish the paper by noting that the HPD kernel $\cE$ in all the cases of Theorem~\ref{main-theorem} 
fits into an exact sequence
\begin{equation*}
0 \to \cS \boxtimes \cO_{\vQ} \to \cO_Q \boxtimes \cS'(H') \to \eps_*\cE \to 0,
\end{equation*}
where $\eps \colon \bH(Q,\vQ) \to Q \times \vQ$ is the embedding, 
$\cS$ and $\cS'$ are appropriate spinor bundles on~$Q$ and~$\vQ$, 
and the first morphism is the composition of (the pullback from $Q$ of) the natural embedding of $\cS$ into the half-spinor representation 
with (the pullback from $\vQ$ of) the surjection from the half-spinor representation onto $\cS'(H')$ 
(see~\eqref{eq:spinor-sequences-even} and~\eqref{eq:spinor-sequences-odd}).
Indeed, in case~\eqref{HPD-quadrics-1} this was already shown in~\eqref{eq:resolution-delta-psi}.
Further, in case~\eqref{HPD-quadrics-dc-even} the HPD kernel is obtained by restriction, 
hence~\eqref{eq:spinor-restriction} shows that the above formula is still true.
Further, in case~\eqref{HPD-quadrics-d-odd} the HPD kernel is obtained by transposition and dualization,
hence~\eqref{eq:spinor-duality-even} and~\eqref{eq:spinor-duality-odd} imply the formula.
Finally, in case~\eqref{HPD-quadrics-dc-odd} the HPD kernel is again obtained by restriction, 
so we again conclude by~\eqref{eq:spinor-restriction}.
\end{remark}


\providecommand{\bysame}{\leavevmode\hbox to3em{\hrulefill}\thinspace}
\providecommand{\MR}{\relax\ifhmode\unskip\space\fi MR }
\providecommand{\MRhref}[2]{%
  \href{http://www.ams.org/mathscinet-getitem?mr=#1}{#2}
}
\providecommand{\href}[2]{#2}


\end{document}